\documentclass[11pt]{article}

\usepackage[T1]{fontenc}
\usepackage{lmodern}
\usepackage[a4paper,margin=0.9in]{geometry}
\usepackage{microtype}
\usepackage{amsmath,amssymb,amsthm,mathtools,bm}
\usepackage{booktabs}
\usepackage{graphicx}

\usepackage{xcolor}
\usepackage{caption}

\DeclareCaptionLabelSeparator{revisionperiod}{.\space}
\usepackage{float}
\usepackage{enumitem}
\usepackage[numbers,sort&compress]{natbib}
\usepackage[hidelinks]{hyperref}

\numberwithin{equation}{section}
\allowdisplaybreaks
\setlist{nosep}

\newtheorem{theorem}{Theorem}[section]
\newtheorem{proposition}[theorem]{Proposition}
\newtheorem{lemma}[theorem]{Lemma}
\newtheorem{corollary}[theorem]{Corollary}
\newtheorem{definition}[theorem]{Definition}
\theoremstyle{remark}

\newcommand{\R}{\mathbb R}

\newcommand{\E}{\mathbb E}
\newcommand{\Var}{\operatorname{Var}}
\newcommand{\SCV}{\operatorname{SCV}}
\newcommand{\e}{\mathrm e}
\newcommand{\ii}{\mathrm i}
\newcommand{\dd}{\,\mathrm d}
\newcommand{\Tcal}{\mathcal T}
\newcommand{\Hcal}{\mathcal H}
\newcommand{\abs}[1]{\left|#1\right|}

\newcommand{\vH}{v^{\mathrm H}}
\newcommand{\rhoH}{\rho^{\mathrm H}}
\DeclareMathOperator*{\argmin}{arg\,min}

\title{\textbf{Least-Variable Harmonic Matrix-Exponential Distributions}}
\author{Maria Laura Battagliola and Oscar Peralta}
\date{}

\begin{document}
\maketitle

\begin{abstract}
Concentrated matrix-exponential (CME) distributions are random clocks approximating a fixed time, with quality measured by the squared coefficient of variation (SCV); a well-known construction builds such clocks from products of cosine-squared terms under shared exponential damping, but the number of these terms, and hence the number of parameters to optimize, grows with the order. Since exhaustive search over the full parameter space becomes prohibitively expensive at high orders, previous work has relied on low-dimensional heuristic parametrizations rather than the true optimum. We introduce a strictly larger class of common-damping harmonic densities, built from an arbitrary nonnegative trigonometric polynomial rather than such a product. For each fixed pair of scalar parameters, coefficient optimization reduces to a single eigenvalue problem, leaving a two-dimensional nonlinear search independent of the order. The enlargement leaves the infimum unchanged from that of the classical cosine-squared construction. Applying this exact characterization at the orders where the three-parameter heuristic allows comparison gives smaller reported SCV values, with larger gains at higher orders. Separately, an explicit harmonic construction gives an $O(N^{-2})$ upper bound on the attainable SCV, where $N$ is the ME representation budget, compared with the Erlang distribution's linear rate $1/N$.
\end{abstract}

\paragraph{Keywords.}
Concentrated matrix-exponential distributions; squared coefficient of variation; harmonic kernels; Toeplitz matrices; randomization.

\paragraph{Mathematics Subject Classification (2020).}
Primary 60E05. Secondary 42A05, 15B05.

\section{Introduction}\label{sec:introduction}

Randomization methods replace a deterministic time $t_0$ by a random horizon $t_0U$, where $U$ is a nonnegative random variable with mean one. The purpose is to retain concentration around $t_0$ while choosing the law of $U$ so that the randomized problem is easier to evaluate. Erlangization is the classical example, in which a mean-one Erlang random variable of order $N$ has variance $1/N$, and this is the smallest variance among mean-one phase-type distributions of order $N$ \citep{AldousShepp1987}.

A matrix-exponential (ME) distribution of order $N$ has density
\begin{equation}\label{eq:me-density}
  f(t)=\bm\alpha\,\e^{\bm A t}(-\bm A)\bm 1,
  \qquad t\geq0,
\end{equation}
for a row vector $\bm\alpha\in\R^N$, a matrix $\bm A \in \R^{N \times N}$, and the column vector of ones $\bm 1\in\R^N$, subject to the requirement that $f$ be a probability density. A phase-type distribution is the special case of \eqref{eq:me-density} in which $(\bm\alpha,\bm A)$ describes a finite-state Markov jump process with absorption, so that $\bm\alpha$ is nonnegative and $\bm A$ is a transient intensity matrix. The order $N$ is simply the size of this matrix representation. Equivalently, it is the dimension of the real matrix used to represent the finite collection of exponential and oscillatory modes of the density. ME distributions retain the finite-dimensional matrix representation and rational Laplace transform of phase-type distributions, but do not require the parameters themselves to have this Markovian sign structure. See \citet[Chapters~3--4]{BladtNielsen2017} for a comprehensive treatment of phase-type and ME distributions.

The concentrated matrix-exponential (CME) literature exploits this freedom to choose $\bm\alpha$ and $\bm A$ without a Markovian sign structure to construct positive random clocks with much smaller variability. The original CME construction, introduced by \citet{HorvathSafarTelekZambo2016}, is the common-damping harmonic family generated by the density function
\begin{equation}\label{eq:intro-original-family}
  f_{\omega,\bm\phi}(t)
  ={c\,}\e^{-t}\prod_{j=1}^{L}
    \cos^2\!\left(\frac{\omega t-\phi_j}{2}\right),
  \qquad t\geq0,
\end{equation}
where $\bm\phi=(\phi_1,\ldots,\phi_L)$. Here, $L$ is the number of cosine factors, $\omega>0$ is their common angular frequency, $\phi_1,\ldots,\phi_L\in\R$ are the corresponding angular offsets, and $c>0$ is a normalizing constant chosen so that $f_{\omega,\bm\phi}$ integrates to one.

Applications of this family include non-overshooting numerical inverse Laplace transformation \citep{HorvathTalyigasTelek2018,HorvathEtAl2020NILT}, numerical inverse $Z$-transformation \citep{HorvathMeszarosTelek2020}, and deterministic-time approximation through ME-fication \citep{AkarEtAl2021MEfication,Telek2022Transient}, with \citet{AlmousaEtAl2022CME} giving the computational account of the CME method. High-order numerical optimization has also been pursued directly \citep{HorvathHorvathTelek2020,AlmousaTelek2021}, alongside a related construction with real eigenvalues, the CME-R family \citep{MeszarosTelek2022}. For large $L$, searching for the least variable member of this family produces a high-dimensional nonconvex optimization problem, which an analytic predecessor avoids by giving an explicit common-damping CME family without numerical optimization \citep{BattagliolaPeralta2026OptFree}.

Our contributions in this paper are twofold. First, we generalize the cosine-squared construction \eqref{eq:intro-original-family} to a strictly larger class of common-damping harmonic densities, which we call the \emph{harmonic CME family} and define formally in Definition~\ref{def:harmonic-cme}. For each fixed pair of scalar parameters, coefficient optimization over this larger class reduces to a single Hermitian generalized eigenvalue problem, and the resulting infimum equals that of the classical cosine-squared construction. At the orders where a three-parameter heuristic from the literature allows comparison, the numerical values obtained from this characterization are smaller, with larger gains at higher orders, where an exhaustive numerical search over the original $(L+1)$-dimensional parametrization becomes prohibitively expensive.

Second, our contribution is asymptotic. Writing $\vH_L$ for the SCV infimum within the harmonic CME family of degree $L$, we construct an explicit sequence achieving
\begin{equation}\label{eq:intro-limsup}
  \limsup_{L\to\infty}(2L+1)^2\vH_L\leq1.
\end{equation}
This gives an $O(N^{-2})$ upper bound on the attainable SCV, where $N=2L+1$ is the representation budget, compared with the linear rate $1/N$ for a mean-one Erlang clock. The construction separates the clock's within-period location from an independent geometric number of complete periods, controlling the former with a sine-weighted Fourier polynomial and a cosine-power tail suppressor and the latter with logarithmically growing damping.

The paper is organized as follows. Section~\ref{sec:harmonic-clocks} introduces the rescaled harmonic CME class and the quadratic scalarization. Section~\ref{sec:reduction} gives the Toeplitz representation and the exact two-parameter reduction. Section~\ref{sec:asymptotics} proves the asymptotic upper bound \eqref{eq:intro-limsup}. Section~\ref{sec:numerics} gives a short numerical comparison with previous optimization attempts. Section~\ref{sec:discussion} discusses the relation with Erlang randomization and the polynomial-square repeated-pole problem.

\section{Harmonic random clocks}\label{sec:harmonic-clocks}

This section defines the harmonic CME family and studies basic properties of the squared coefficient of variation. Moreover, it expresses a monotone transform of the SCV as the minimum of a relative quadratic loss over a scalar parameter, which supplies the second of the two outer variables used throughout the paper.

\subsection{The rescaled harmonic CME family}

For a nonzero nonnegative kernel $f:[0,\infty) \to [0,\infty)$ with finite positive moments $m_0(f),m_1(f),m_2(f)$, of orders zero, one and two, respectively, write
\begin{equation}\label{eq:moments-scv}
  m_r(f)=\int_0^\infty z^r f(z)\dd z,
  \qquad
  \SCV(f)=\frac{m_0(f)m_2(f)}{m_1(f)^2}-1.
\end{equation}
The SCV in \eqref{eq:moments-scv} is unchanged by multiplication of $f$ by a positive constant or by rescaling of the underlying random variable, so we can work with unnormalized rescaled kernels throughout.

Because of this invariance, set
\[
  z=\omega t,
  \qquad
  \beta=\frac1\omega.
\]
Up to a positive normalizing factor, the classical density \eqref{eq:intro-original-family} then becomes
\begin{equation}\label{eq:intro-scaled-family}
  \e^{-\beta z}\prod_{j=1}^{L}
  \cos^2\!\left(\frac{z-\phi_j}{2}\right),
  \qquad z\geq0.
\end{equation}
The product of the squared-cosine factors is a nonnegative trigonometric polynomial of degree at most $L$: explicitly, each factor is itself a degree-one trigonometric polynomial,
\begin{equation}\label{eq:cosine-factor-expansion}
  \cos^2\!\left(\frac{z-\phi_j}{2}\right)
  =\frac12+\frac14\e^{\ii(z-\phi_j)}+\frac14\e^{-\ii(z-\phi_j)},
\end{equation}
so multiplying the $L$ factors together and collecting powers of $\e^{\ii z}$ expands the product into a sum $T(z)=\sum_{k=-L}^{L}c_k\e^{\ii kz}$, with the top frequencies $k=\pm L$ arising from picking the $\e^{\pm\ii(z-\phi_j)}$ term in \eqref{eq:cosine-factor-expansion} for every factor $j$. We therefore consider the larger class obtained by allowing an arbitrary nonzero polynomial in the cone of such sums.

\begin{definition}[Harmonic CME family]\label{def:harmonic-cme}
Let $\Tcal_L^+$ denote the cone of nonnegative real trigonometric polynomials of degree at most $L$,
\[
  \Tcal_L^+
  =\left\{
    T(z)=\sum_{k=-L}^{L}c_k\e^{\ii kz}
    \ \middle|\
    c_{-k}=\overline{c_k},\ T(z)\geq0\ \text{for all }{z \geq 0}
   \right\}.
\]
For $\beta>0$, the harmonic CME family of degree $L$ and damping $\beta$ is
\[
  \Hcal_L(\beta)
  =\left\{z\mapsto\e^{-\beta z}T(z)
    \ \middle|\
    T\in\Tcal_L^+,\ T\not\equiv0\right\}.
\]
\end{definition}

The classical common-damping harmonic family in \eqref{eq:intro-original-family} is therefore the particular case of $\Hcal_L(\beta)$ in which $T$ is a product of $L$ squared-cosine factors, rather than a general element of $\Tcal_L^+$. The advantage of the rescaled coordinate is that the harmonic period is now always $2\pi$, with the single parameter $\beta$ controlling the exponential decay from one period to the next.

Let $f(z)=\e^{-\beta z}T(z)$ with $T(z)=\sum_{k=-L}^{L}c_k\e^{\ii kz}$. Its Laplace transform is
\[
  \mathcal L f(s)
  =\int_0^\infty \e^{-sz}f(z)\dd z
  =\sum_{k=-L}^{L}\frac{c_k}{s+\beta-\ii k},
  \qquad \Re(s)>-\beta.
\]
Equivalently,
\[
  \mathcal L f(s)
  =\frac{q(s)}{\displaystyle\prod_{k=-L}^{L}(s+\beta-\ii k)},
  \qquad \deg q\leq 2L,
\]
where $q$ is a polynomial of degree at most $2L$. After cancellation of common factors, the actual pole set, and hence the minimal representation order, may be smaller. Hence, the possible poles are
\[
  -\beta+\ii k,
  \qquad {k=-L,\ldots,L}.
\]
The pole at $-\beta$ contributes one real mode, while each conjugate pair $-\beta\pm\ii k$, $k=1,\ldots,L$, contributes two real dimensions to the ME order. Thus, the resulting ME representation has order at most $2L+1$. We call this quantity the representation budget $N=2L+1$.

Define the infimum over all damping parameters and admissible envelopes, not just the cosine-squared special case above, by
\[
  \vH_L
  =\inf_{\beta>0}\ \inf_{f\in\Hcal_L(\beta)}\SCV(f).
\]

\subsection{Relative quadratic loss}

The proposition below expresses a monotone transform of the SCV in \eqref{eq:moments-scv} as the minimum, over one positive scalar, of a relative quadratic loss. This auxiliary scalar becomes, together with $\beta$, one of the two scalar variables left in the outer nonlinear optimization once the coefficient vector has been eliminated.

\begin{proposition}\label{prop:relative-loss}
Let $f\geq0$ have finite positive moments $m_0(f),m_1(f),m_2(f)$, and for $\theta>0$ define
\begin{equation}\label{eq:relative-loss}
  R_\theta(f)
  =\frac{\int_0^\infty(z-\theta)^2f(z)\dd z}
         {\theta^2\int_0^\infty f(z)\dd z}.
\end{equation}
Then
\begin{equation}\label{eq:theta-star}
 { \argmin_{\theta>0}R_\theta(f)=\frac{m_2(f)}{m_1(f)}}
\end{equation}
and, writing $\rho(f)$ for the resulting minimum,
\begin{equation}\label{eq:rho-transform}
{
  \rho(f)=\min_{\theta>0}R_\theta(f)
  =1-\frac{m_1(f)^2}{m_0(f)m_2(f)}
  =\frac{\SCV(f)}{1+\SCV(f)}.}
\end{equation}
\end{proposition}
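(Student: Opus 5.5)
We expand the numerator of \eqref{eq:relative-loss} in powers of $\theta$ and reduce everything to the minimization of a one-variable rational function. Writing $m_r=m_r(f)$ for brevity, linearity of the integral gives
\[
  R_\theta(f)=\frac{m_2-2\theta m_1+\theta^2m_0}{\theta^2m_0}
  =1-\frac{2m_1}{m_0}\,u+\frac{m_2}{m_0}\,u^2,
  \qquad u=\frac1\theta>0 .
\]
Since $\theta\mapsto1/\theta$ is a bijection of $(0,\infty)$ onto itself, minimizing $R_\theta(f)$ over $\theta>0$ is the same as minimizing this quadratic in $u$ over $u>0$.

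The quadratic has positive leading coefficient $m_2/m_0$, so it is strictly convex. Its unconstrained minimizer is
\[
  u^*=\frac{m_1}{m_2},
\]
which is strictly positive because the moments are positive by hypothesis. Hence $u^*$ is the unique minimizer on $(0,\infty)$, and the unique minimizer in $\theta$ is $\theta^*=1/u^*=m_2/m_1$, which gives \eqref{eq:theta-star}.

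Substituting $u^*$ into the quadratic yields
\[
  \rho(f)=1-\frac{2m_1^2}{m_0m_2}+\frac{m_1^2}{m_0m_2}=1-\frac{m_1^2}{m_0m_2}.
\]
For the final identity in \eqref{eq:rho-transform}, note from \eqref{eq:moments-scv} that $1+\SCV(f)=m_0m_2/m_1^2$. Therefore
\[
  1-\frac{m_1^2}{m_0m_2}=1-\frac1{1+\SCV(f)}=\frac{\SCV(f)}{1+\SCV(f)}.
\]
This last step needs $1+\SCV(f)>0$, which holds because all moments are positive. In fact $\SCV(f)\ge0$ by Cauchy--Schwarz, although nonnegativity is not required for the identity.

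There is no real obstacle here. The one point that needs care is checking that the critical point lies inside the constraint set $\theta>0$, and this follows directly from the positivity of $m_1$ and $m_2$.
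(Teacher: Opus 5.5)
Your proof is correct and is essentially the paper's argument: substituting $u=1/\theta$ turns $R_\theta(f)$ into the strictly convex quadratic $1-2(m_1/m_0)u+(m_2/m_0)u^2$, which is minimized at $u=m_1/m_2$, and direct substitution then gives \eqref{eq:rho-transform}. You additionally make explicit two points the paper leaves to the reader: that the critical point lies in $(0,\infty)$, and the short algebra converting $1-m_1^2/(m_0m_2)$ into $\SCV(f)/(1+\SCV(f))$.
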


\begin{proof}
With $y=1/\theta$,
\[
  R_{1/y}(f)
  =1-2\frac{m_1(f)}{m_0(f)}y+\frac{m_2(f)}{m_0(f)}y^2.
\]
This is a strictly convex quadratic in $y>0$, minimized at $y=m_1(f)/m_2(f)$. Substitution gives \eqref{eq:rho-transform}.

\end{proof}

Since the inverse map of \eqref{eq:rho-transform} is increasing on $[0,1)$, minimizing the $\SCV(f)$ is equivalent to minimizing $\rho(f)$. For fixed $\theta$, both the numerator and the denominator in \eqref{eq:relative-loss} are linear functionals of the nonnegative envelope $T$. This fact will also be used to recover the cosine-square form \eqref{eq:intro-scaled-family} from the larger trigonometric cone.

\section{Two-parameter reduction}\label{sec:reduction}

This section contains the main finite-order result of the paper. Specifically, it shows that the coefficient optimization for fixed $(\beta,\theta)$ is an inner generalized eigenvalue problem, while $(\beta,\theta)$ themselves are the two remaining variables to be optimized numerically. The resulting two-parameter formulation has the same optimal value as the original cosine-square family.

\subsection{Spectral factors and Toeplitz moments}

The classical Fej\'er--Riesz factorization \citep[Section~1]{DritschelRovnyak2010} states that every $T\in\Tcal_L^+$ can be written as
\begin{equation}\label{eq:fejer-riesz}
  T(z)=\abs{P(\e^{\ii z})}^2,
\end{equation}
where 
\[
     P(w)=\sum_{k=0}^{L}p_kw^k.
\]
Conversely, \eqref{eq:fejer-riesz} is a nonnegative trigonometric polynomial of degree at most $L$. Thus, the harmonic cone can be parameterized by the coefficient vector $\bm p=(p_0,\ldots,p_L)^{\top}$, with no positivity constraints left on $\bm p$, where $\top$ denotes ordinary transpose. Since the coefficients may be complex, we write the conjugate transpose as $\overline{\bm p}^{\top}$.

For $r=0,1,2$ and $\beta>0$, define the $(L+1)\times(L+1)$ Hermitian Toeplitz matrix $M_r(\beta)$ by its entries
\begin{equation}\label{eq:moment-matrix}
  [M_r(\beta)]_{jk}
  =\frac{r!}{\{\beta-\ii(k-j)\}^{r+1}},
  \qquad 0\leq j,k\leq L.
\end{equation}

\begin{proposition}\label{prop:moment-identity}
Let
\[
  f_{\bm p,\beta}(z)
  =\e^{-\beta z}\abs{P(\e^{\ii z})}^2.
\]
Then, for $r=0,1,2$,
\begin{equation}\label{eq:moment-quadratic}
  m_r(f_{\bm p,\beta})=\overline{\bm p}^{\top}M_r(\beta)\bm p.
\end{equation}
Moreover, $M_0(\beta)$ is positive definite for every $\beta>0$.
\end{proposition}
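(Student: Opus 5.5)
Expanding the squared modulus is the whole proof of the moment identity. Since $|P(\e^{\ii z})|^2=\overline{P(\e^{\ii z})}P(\e^{\ii z})$, we get
\[
  \abs{P(\e^{\ii z})}^2=\sum_{j=0}^{L}\sum_{k=0}^{L}\overline{p_j}\,p_k\,\e^{\ii(k-j)z}.
\]
Substituting this finite double sum into $m_r(f_{\bm p,\beta})=\int_0^\infty z^r\e^{-\beta z}\abs{P(\e^{\ii z})}^2\dd z$ and exchanging sum and integral (finitely many absolutely integrable terms, since $\beta>0$) reduces everything to the elementary integral
\[
  \int_0^\infty z^r\e^{-(\beta-\ii m)z}\dd z=\frac{r!}{(\beta-\ii m)^{r+1}},\qquad m\in\mathbb Z.
\]
This is the Gamma integral with complex parameter of positive real part. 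It can be justified by analytic continuation, or by differentiating $\int_0^\infty\e^{-sz}\dd z=1/s$ $r$ times in $s$ on $\Re s>0$. With $m=k-j$ this yields exactly $\sum_{j,k}\overline{p_j}[M_r(\beta)]_{jk}p_k=\overline{\bm p}^{\top}M_r(\beta)\bm p$. Along the way we note that the entries depend only on $k-j$, so the matrix is Toeplitz. It is Hermitian because swapping $j,k$ conjugates $(\beta-\ii(k-j))^{-(r+1)}$.

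For positive definiteness of $M_0(\beta)$, I would use the $r=0$ case of the identity just proved: $\overline{\bm p}^{\top}M_0(\beta)\bm p=\int_0^\infty\e^{-\beta z}\abs{P(\e^{\ii z})}^2\dd z\ge0$. It remains to show strictness for $\bm p\neq0$. If the integral vanishes, then the continuous nonnegative integrand vanishes identically on $[0,\infty)$, so $P(\e^{\ii z})=0$ for all $z\in[0,2\pi)$. A nonzero polynomial of degree at most $L$ has at most $L$ roots, but the unit circle has infinitely many points, so $P\equiv0$, i.e.\ $\bm p=0$.

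An alternative check is to write $M_0(\beta)$ as a Gram matrix of the functions $z\mapsto\e^{-\beta z/2}\e^{\ii kz}$ in $L^2(0,\infty)$. These functions are linearly independent, and the Gram matrix of linearly independent vectors is positive definite.

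There is no real obstacle here. The only points needing care are the validity of the complex Gamma integral and the interchange of sum and integral, both of which are immediate from $\beta>0$ giving absolute integrability.
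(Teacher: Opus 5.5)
Your proposal is correct and follows essentially the same route as the paper. You expand $\abs{P(\e^{\ii z})}^2$ as the double sum $\sum_{j,k}\overline{p_j}p_k\e^{\ii(k-j)z}$, integrate termwise against $z^r\e^{-\beta z}$, and obtain positive definiteness of $M_0(\beta)$ from the $r=0$ identity together with the fact that a nonzero polynomial cannot vanish on the whole unit circle. Your Gram-matrix remark and the Hermitian/Toeplitz checks are harmless extras that the paper does not need.
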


\begin{proof}
Since
\[
  \abs{P(\e^{\ii z})}^2
  =\sum_{j,k=0}^{L}\overline{p_j}p_k\e^{\ii(k-j)z},
\]
the moment of order $r$ is obtained by integrating the terms
\[
  \int_0^\infty z^r\e^{-\beta z}\e^{\ii(k-j)z}\dd z
  =\frac{r!}{\{\beta-\ii(k-j)\}^{r+1}},
\]
which proves \eqref{eq:moment-quadratic}. If $\bm p\neq0$, then
\[
  \overline{\bm p}^{\top}M_0(\beta)\bm p
  =\int_0^\infty\e^{-\beta z}\abs{P(\e^{\ii z})}^2\dd z>0,
\]
since $P$ is a nonzero polynomial, and therefore has only finitely many zeros. In particular, it cannot vanish on the entire unit circle.

\end{proof}

\subsection{The generalized eigenvalue problem}

For $\beta,\theta>0$, Proposition~\ref{prop:moment-identity} writes the numerator of the relative quadratic loss as a quadratic form in $\bm p$ with matrix
\[
  A_L(\beta,\theta)
  =M_2(\beta)-2\theta M_1(\beta)+\theta^2M_0(\beta).
\]
Define
\begin{equation}\label{eq:lambda-profile}
  \lambda_L(\beta,\theta)
  =\min_{\bm p\neq0}
    \frac{\overline{\bm p}^{\top}A_L(\beta,\theta)\bm p}
         {\theta^2\overline{\bm p}^{\top}M_0(\beta)\bm p}.
\end{equation}
The ratio minimized in \eqref{eq:lambda-profile} is a generalized Rayleigh quotient, since $A_L(\beta,\theta)$ is Hermitian, being a linear combination of Hermitian matrices with real coefficients, and $\theta^2 M_0(\beta)$ is Hermitian positive definite as shown in Proposition~\ref{prop:moment-identity}. It follows that $\lambda_L(\beta,\theta)$ is the smallest generalized eigenvalue of
\[
  A_L(\beta,\theta)\bm p
  =\lambda\theta^2M_0(\beta)\bm p.
\]
Equivalently, after a positive-definite square-root or Cholesky reduction of $\theta^2M_0(\beta)$, this becomes an ordinary Hermitian Rayleigh-quotient problem. We use $\lambda_L(\beta,\theta)$ for this smallest generalized eigenvalue throughout. See, for example, \citet{GolubVanLoan2013} for the standard theory of Hermitian definite generalized eigenvalue problems.

\begin{theorem}[Two-parameter characterization]\label{thm:two-parameter}
Define
\[
  \rhoH_L
  =\inf_{\beta>0,\,\theta>0}\lambda_L(\beta,\theta).
\]
Then
\begin{equation}\label{eq:v-from-rho}
  \vH_L=\frac{\rhoH_L}{1-\rhoH_L}.
\end{equation}
Thus the full coefficient optimization is absorbed into $\lambda_L(\beta,\theta)$, and the remaining nonlinear optimization has only the two scalar variables $(\beta,\theta)$, independently of $L$.
\end{theorem}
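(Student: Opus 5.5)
The plan is to chain three reductions, each already available from earlier results: Fej\'er--Riesz to parametrize the cone, Proposition~\ref{prop:moment-identity} to turn moments into quadratic forms, and Proposition~\ref{prop:relative-loss} to trade the SCV for $\rho$ and expose $\theta$ as a second minimization variable. The identity \eqref{eq:v-from-rho} will then follow because $\rho\mapsto\rho/(1-\rho)$ is the inverse of the increasing map $v\mapsto v/(1+v)$ on $[0,\infty)\to[0,1)$. Being continuous and increasing, this map commutes with infima.

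\textbf{Step 1 (parametrization).} Fix $\beta>0$. By \eqref{eq:fejer-riesz}, the map $\bm p\mapsto e^{-\beta z}|P(e^{iz})|^2$ sends $\mathbb{C}^{L+1}\setminus\{0\}$ onto $\Hcal_L(\beta)$. It hits every nonzero $T\in\Tcal_L^+$, and conversely a nonzero $\bm p$ gives a nonzero $T$ because $P$ has finitely many zeros. Every $f\in\Hcal_L(\beta)$ has finite positive moments $m_0,m_1,m_2$, since $T\ge0$ is not identically zero and $e^{-\beta z}$ gives integrability. Hence Proposition~\ref{prop:relative-loss} applies to each such $f$. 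Using \eqref{eq:moment-quadratic}, $R_\theta(f_{\bm p,\beta})$ equals exactly the Rayleigh quotient in \eqref{eq:lambda-profile}, because the numerator $\int(z-\theta)^2 f = m_2-2\theta m_1+\theta^2 m_0$ is the quadratic form of $A_L(\beta,\theta)$.

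\textbf{Step 2 (interchanging infima).} By \eqref{eq:rho-transform},
\[
\inf_{\beta}\inf_{f\in\Hcal_L(\beta)}\rho(f)=\inf_{\beta}\inf_{\bm p\ne0}\inf_{\theta>0}R_\theta(f_{\bm p,\beta}).
\]
Iterated infima over a product set can be taken in any order, so this equals $\inf_{\beta,\theta}\inf_{\bm p}R_\theta=\inf_{\beta,\theta}\lambda_L(\beta,\theta)=\rhoH_L$. The inner minimum is attained as the smallest generalized eigenvalue, but attainment is not needed.

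\textbf{Step 3 (transfer to SCV).} For each $f$ we have $\rho(f)=g(\SCV(f))$ with $g(v)=v/(1+v)$, which is continuous and strictly increasing. Therefore $\inf\rho=g(\inf\SCV)$, that is, $\rhoH_L=g(\vH_L)$. Two facts are needed for the inversion. First, $\rhoH_L<1$, because any single admissible $f$ has $\rho(f)<1$; strict Cauchy--Schwarz $m_1^2<m_0m_2$ holds since $f$ is not a point mass. Second, $\vH_L\ge 0$ is finite. Inverting $g$ then gives \eqref{eq:v-from-rho}.

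The main subtlety is Step 3: one must check that $g$ preserves infima, not only minima, which holds by its continuity and monotonicity, and that $\rhoH_L$ stays strictly below $1$ so the formula is well defined. The rest of the argument is bookkeeping.
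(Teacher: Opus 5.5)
Your proposal is correct and follows the paper's proof essentially step for step: Fej\'er--Riesz to parametrize $\Hcal_L(\beta)$ by $\bm p$, Proposition~\ref{prop:moment-identity} to identify the Rayleigh quotient in \eqref{eq:lambda-profile} with $R_\theta(f_{\bm p,\beta})$, the interchange of infima, and then the monotone transform of Proposition~\ref{prop:relative-loss}; your check that $v\mapsto v/(1+v)$ preserves infima and not only minima spells out what the paper leaves implicit. One small slip: strict Cauchy--Schwarz gives $\rho(f)>0$, whereas $\rho(f)<1$ follows simply from $m_1(f)>0$ (and in any case $\rhoH_L<1$ is automatic from $\rhoH_L=\vH_L/(1+\vH_L)$ with $\vH_L$ finite).
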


\begin{proof}
By Proposition~\ref{prop:moment-identity}, substituting the three moment identities into \eqref{eq:relative-loss} shows that the generalized Rayleigh quotient in \eqref{eq:lambda-profile} is exactly $R_\theta(f_{\bm p,\beta})$. Fej\'er--Riesz factorization shows that varying $\bm p$ ranges over all nonnegative trigonometric envelopes in $\Tcal_L^+$. Therefore
\[
  \inf_{\beta>0,\theta>0}\lambda_L(\beta,\theta)
  =\inf_{\beta>0}\inf_{f\in\Hcal_L(\beta)}\inf_{\theta>0}R_\theta(f).
\]
Applying Proposition~\ref{prop:relative-loss} and then the monotone transformation in \eqref{eq:rho-transform} yields \eqref{eq:v-from-rho}.
\end{proof}

For any fixed pair $(\beta,\theta)$, a minimizing coefficient vector is recovered as any nonzero generalized eigenvector associated with $\lambda_L(\beta,\theta)$. Its entries give $P(w)=\sum_{k=0}^Lp_kw^k$ and hence $T(z)=|P(\e^{\ii z})|^2$. The eigenvector determines $P$ up to a nonzero complex scalar and $T$ up to a positive scalar. In particular, if the outer minimum is attained, the full harmonic kernel is recovered directly from the minimizing pair $(\beta,\theta)$.

For a fixed pair $(\beta,\theta)$, let $f_{\bm p,\beta}$ be recovered from a minimizing eigenvector. Proposition~\ref{prop:relative-loss} gives the exact inequality
\[
  \SCV(f_{\bm p,\beta})
  \leq\frac{\lambda_L(\beta,\theta)}{1-\lambda_L(\beta,\theta)},
  \qquad \lambda_L(\beta,\theta)<1,
\]
with equality precisely when $\theta=m_2(f_{\bm p,\beta})/m_1(f_{\bm p,\beta})$, as shown in \eqref{eq:theta-star}. This holds for the recovered $f_{\bm p,\beta}$ whenever $\lambda_L(\beta,\theta)<1$, whether or not the outer search over these two variables has converged, so a valid upper bound on $\vH_L$ exists for any such recovered kernel. Its numerical evaluation is addressed in Section~\ref{sec:numerics}.

\subsection{Return to cosine-square products}

For fixed $(\beta,\theta)$, normalizing the envelope makes the relative loss a linear functional on a compact convex slice of $\Tcal_L^+$, so a minimizing envelope can be chosen on an extreme ray of the cone. A ray $\{aT:a\geq0\}$ is called extreme if every decomposition $T=T_1+T_2$ with $T_1,T_2\in\Tcal_L^+$ forces both summands to be nonnegative scalar multiples of $T$, making an extreme ray a one-dimensional edge of the cone. The extreme rays characterized below take the form of products of squared sines, which is only a reparametrization of the original family, since
\[
\abs{\e^{\ii z}-\e^{\ii\phi_j}}^2
=4\sin^2\left(\frac{z-\phi_j}{2}\right)
=4\cos^2\left(\frac{z-(\phi_j+\pi)}{2}\right).
\]

\begin{proposition}\label{prop:extreme-rays}
A nonzero $T\in\Tcal_L^+$ spans an extreme ray of $\Tcal_L^+$ if and only if it has total zero multiplicity $2L$ on the unit circle. Equivalently, up to a positive constant,
\begin{equation}\label{eq:extreme-product}
  T(z)
  =\prod_{j=1}^{L}\abs{\e^{\ii z}-\e^{\ii\phi_j}}^2
  =4^L\prod_{j=1}^{L}
    \sin^2\!\left(\frac{z-\phi_j}{2}\right),
\end{equation}
where repeated offsets are allowed.
\end{proposition}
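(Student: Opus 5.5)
We prove this by combining a zero-counting argument with a perturbation argument. Throughout we work with the complex-variable form. Write $w=\e^{\ii z}$ and associate to $T(z)=\sum_{k=-L}^{L}c_k\e^{\ii kz}$ the Laurent polynomial $\sum_k c_kw^k$. Then $Q(w)=w^L\sum_k c_kw^k$ is an ordinary polynomial of degree at most $2L$. Since $T$ is real on the circle, $Q$ is self-inversive, and its zeros off the circle come in pairs $a,1/\bar a$. Since $T\geq0$, every zero of $Q$ on the circle has even multiplicity. In particular, the total zero multiplicity of $T$ on the circle is at most $2L$ unless $T\equiv0$. The identity $\abs{\e^{\ii z}-\e^{\ii\phi}}^2=4\sin^2((z-\phi)/2)$ shows that the product in \eqref{eq:extreme-product} is an element of $\Tcal_L^+$ with exactly $2L$ zeros, counted with multiplicity, at the points $\phi_j$.

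\emph{Sufficiency.} Suppose $T$ has $2L$ zeros on the circle, counted with multiplicity, and write $T=T_1+T_2$ with $T_i\in\Tcal_L^+$. Since $0\leq T_i\leq T$, each $T_i$ vanishes wherever $T$ does. Moreover, at a zero of $T$ of order $2m$, nonnegativity together with $T_i\leq T$ forces $T_i$ to vanish to order at least $2m$: a Taylor expansion shows that if $T_i$ vanished to lower (even) order, then $T_i/T\to\infty$ at that point. So $Q_i$ is divisible by the degree-$2L$ polynomial $\prod_j(w-\e^{\ii\phi_j})^2$ (up to normalisation). Since $\deg Q_i\leq 2L$, this gives $Q_i=a_iQ$ for a scalar $a_i$. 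Comparing signs shows $a_i\geq0$. Hence the ray is extreme. The same divisibility argument also gives the product form: if $T$ has $2L$ circle zeros, then $Q$ equals a constant times $\prod_j(w-\e^{\ii\phi_j})^2$ with $\phi_j$ repeated according to half-multiplicity. Converting back with $\abs{\e^{\ii z}-\e^{\ii\phi_j}}^2=-\e^{-\ii\phi_j}w^{-1}(w-\e^{\ii\phi_j})^2$ yields \eqref{eq:extreme-product} up to a positive constant. The sign of the constant is fixed by $T\geq0$.

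\emph{Necessity.} This is the main step. Suppose $T$ has fewer than $2L$ zeros on the circle. By Fej\'er--Riesz \eqref{eq:fejer-riesz}, $T=\abs{P}^2$ where $P$ has degree at most $L$ and its roots may be chosen in $\abs{w}\geq1$. Two cases can occur.
\begin{itemize}
\item If $\deg P<L$, then $T=\abs{P}^2\bigl(\tfrac12\abs{1+w}^2+\tfrac12\abs{1-w}^2\bigr)/2$ splits $T$ into two elements of $\Tcal_L^+$. These are not proportional to $T$, because their zero sets differ at $w=\pm1$. (If $P$ itself vanishes at $\pm1$, rotate the two points to avoid its zeros.)
\item Otherwise $P$ has a root $a$ with $\abs{a}>1$. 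Write $P=(w-a)R$, so that $T=\abs{w-a}^2\abs{R}^2$. The factor $\abs{w-a}^2=\abs{a}^2+1-2\Re(\bar a w)$ is strictly positive on the circle. It can therefore be written as $\epsilon+(\abs{w-a}^2-\epsilon)$ with both parts in $\Tcal_1^+$ for small $\epsilon>0$. Multiplying by $\abs{R}^2$ gives $T=\epsilon\abs{R}^2+(\cdots)$, a decomposition into elements of $\Tcal_L^+$. The summand $\epsilon\abs{R}^2$ has lower degree and so is not a multiple of $T$.
\end{itemize}
In either case $T$ is not extreme.

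The expected obstacle is the multiplicity bookkeeping in the sufficiency direction, namely the claim that $0\leq T_i\leq T$ forces full-order vanishing. I would handle it by a local expansion at each zero, as indicated above. The necessity direction is clean once Fej\'er--Riesz is available.
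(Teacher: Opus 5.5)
Your proof is correct, and the sufficiency direction is essentially the paper's. Both use a local-order argument showing that each summand vanishes to at least the same even order at every zero of $T$, then divisibility by the degree-$2L$ zero factor, then a degree count. You phrase it through $Q(w)=w^L\sum_k c_kw^k$, the paper through the trigonometric factor $Z$.

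The necessity direction takes a different route. The paper writes $T=ZG$, with $Z$ collecting all unit-circle zeros and $G$ strictly positive of degree at most $L-d$. It then perturbs $G$ by $\pm\varepsilon S$ for any real $S$ of degree at most $L-d$ not proportional to $G$. This handles both possible sources of slack at once, with no case split and no need to locate roots.

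You instead take a Fej\'er--Riesz factor with roots in $\abs{w}\geq1$ and treat the two sources of slack separately:
\begin{itemize}
\item spare degree ($\deg P<L$), handled by the identity $\abs{1+w}^2+\abs{1-w}^2=4$ on the circle;
\item an off-circle root $a$, handled by splitting the strictly positive factor $\abs{w-a}^2$ as $\epsilon+(\abs{w-a}^2-\epsilon)$.
\end{itemize}

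Your version buys explicit decompositions and gets the factorization directly from the spectral factor. The paper asserts its corresponding step, that $G$ has degree at most $L-d$, rather briefly, and it amounts to the same root bookkeeping. The cost is that you invoke the root-location refinement of Fej\'er--Riesz. That refinement is standard but is not part of the paper's statement \eqref{eq:fejer-riesz}. Without it you would only need to handle a root at $a=0$ separately, which reduces to your first case.

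One small simplification: in the first case the rotation is unnecessary. Even when $P(-1)=0$, the summand $\abs{P}^2\abs{1+w}^2$ vanishes at $w=-1$ to strictly higher order than $T$, so it cannot be proportional to $T$.
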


\begin{proof}
Every zero of a nonzero nonnegative real trigonometric polynomial has finite even multiplicity. Indeed, if the first nonzero term in the local expansion at a zero had odd order, it would change sign across that zero, contradicting nonnegativity. Let the distinct zeros of $T$ have multiplicities $2m_1,\ldots,2m_s$ and put $d=m_1+\cdots+m_s\leq L$. The factor $4\sin^2((z-\phi)/2)$ has zero multiplicity two but trigonometric degree one, so $d$ counts the amount of trigonometric degree forced by these zeros. Then
\[
  T(z)=Z(z)G(z),
  \qquad
  Z(z)=\prod_{r=1}^{s}
  \left[4\sin^2\!\left(\frac{z-\phi_r}{2}\right)\right]^{m_r},
\]
where $G$ is strictly positive and has degree at most $L-d$. This factorization removes exactly the prescribed unit-circle zeros. Equivalently, it follows by factoring the corresponding roots from a Fej\'er--Riesz spectral factor.

If $d<L$, choose a nonzero real trigonometric polynomial $S$ of degree at most $L-d$ which is not proportional to $G$. Because $G$ is continuous, periodic and strictly positive, it is bounded away from zero, while $S$ is bounded. Hence, for sufficiently small $\varepsilon>0$, both $G+\varepsilon S$ and $G-\varepsilon S$ remain nonnegative. Thus
\[
  T=\frac12 Z(G+\varepsilon S)+\frac12 Z(G-\varepsilon S),
\]
is a nontrivial decomposition in the cone, so the ray is not extreme.

If $d=L$ and $T=T_1+T_2$ with $T_1,T_2\in\Tcal_L^+$, then both summands must vanish at every zero of $T$ to at least the same even multiplicity. Otherwise a lower-order nonnegative term from one summand could not be cancelled by the other, and $T$ itself would have the lower multiplicity. Hence, both are divisible by the degree-$L$ factor $Z$. The degree bound leaves only constant quotients, so each $T_i$ is a nonnegative scalar multiple of $Z$, proving extremality and \eqref{eq:extreme-product}.
\end{proof}

\begin{corollary}
\label{cor:cosine-equivalence}
For every $L\geq0$,
\[
  \vH_L
  =\inf_{\beta>0,\,\phi_1,\ldots,\phi_L\in\R}
  \SCV\!\left(
    \e^{-\beta z}\prod_{j=1}^{L}
    \cos^2\!\left(\frac{z-\phi_j}{2}\right)
  \right).
\]
\end{corollary}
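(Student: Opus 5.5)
The plan is to prove the two inequalities separately. The inequality ``$\leq$'' is immediate. By Proposition~\ref{prop:extreme-rays} and the identity $\abs{\e^{\ii z}-\e^{\ii\phi_j}}^2=4\cos^2((z-(\phi_j+\pi))/2)$, every cosine-square product $\prod_j\cos^2((z-\phi_j)/2)$ lies in $\Tcal_L^+$ and is not identically zero. So for each $\beta>0$ the kernel in the corollary belongs to $\Hcal_L(\beta)$, and the infimum over the subfamily is at least $\vH_L$. For $L=0$ both sides are the SCV of $\e^{-\beta z}$, namely $1$, so nothing more is needed there.

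For ``$\geq$'' it suffices to show the following. Given any $\beta>0$ and any $f=\e^{-\beta z}T\in\Hcal_L(\beta)$, there is a cosine-square product $T^\ast$ with $\SCV(\e^{-\beta z}T^\ast)\leq\SCV(f)$. First apply Proposition~\ref{prop:relative-loss} and set $\theta=\theta^\ast(f)=m_2(f)/m_1(f)$, so that $R_\theta(f)=\rho(f)$. With $\beta$ and $\theta$ fixed, introduce the linear functionals
\[
  \ell_0(T)=\int_0^\infty \e^{-\beta z}T(z)\dd z,\qquad
  \ell_1(T)=\int_0^\infty (z-\theta)^2\e^{-\beta z}T(z)\dd z,
\]
both finite on $\Tcal_L^+$. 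By Proposition~\ref{prop:moment-identity}, $\ell_0$ is strictly positive on $\Tcal_L^+\setminus\{0\}$, and $\ell_1\geq0$ there.

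The key step is a Carathéodory/Krein--Milman argument on the slice $K=\{T\in\Tcal_L^+:\ell_0(T)=1\}$. First check that $K$ is compact. It is closed in a finite-dimensional space. It is also bounded: $\ell_0$ is continuous and strictly positive on the compact set $\Tcal_L^+\cap\{\|c\|=1\}$, so it is bounded below there by some $\delta>0$, which gives $\|c\|\leq1/\delta$ on $K$. Because $\Tcal_L^+$ is a closed pointed cone, its extreme points in $K$ are exactly the normalized generators of its extreme rays. Now take $T/\ell_0(T)\in K$ and use Carathéodory (or Minkowski's theorem) to write it as a finite convex combination of extreme points $E_i$ of $K$. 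Since $\ell_1$ is linear, some $E_i$ satisfies $\ell_1(E_i)\leq\ell_1(T)/\ell_0(T)=R_\theta(f)$. By Proposition~\ref{prop:extreme-rays}, each $E_i$ is a positive multiple of a product \eqref{eq:extreme-product}, which the displayed sine/cosine identity turns into a cosine-square product $T^\ast$ with shifted offsets.

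To finish, Proposition~\ref{prop:relative-loss} gives $\rho(\e^{-\beta z}T^\ast)\leq R_\theta(\e^{-\beta z}T^\ast)\leq R_\theta(f)=\rho(f)$. Since $\rho\mapsto\rho/(1-\rho)$ is increasing, it follows that $\SCV(\e^{-\beta z}T^\ast)\leq\SCV(f)$. Taking infima over $f$ and $\beta$ yields ``$\geq$''. The only step that needs real care is the compactness of $K$ and the identification of its extreme points with extreme rays, because the cone is not compact itself. The bound $\ell_0\geq\delta\|c\|$ on the cone handles this. Beyond that, the argument relies entirely on Proposition~\ref{prop:extreme-rays}, which we take as given.
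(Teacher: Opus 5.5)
Your proposal is correct and follows essentially the same route as the paper's proof. Both arguments normalize by $\ell_0$, use compactness of the convex slice to move the linear functional to an extreme point, identify extreme points with the extreme rays of Proposition~\ref{prop:extreme-rays}, shift offsets by $\pi$, and conclude with Proposition~\ref{prop:relative-loss}. Your Carath\'eodory decomposition, the bound $\ell_0\geq\delta\|c\|$ and your separate treatment of the easy inequality only make explicit what the paper asserts. One harmless slip: $\ell_1(T)/\ell_0(T)=\theta^2R_\theta(f)$ rather than $R_\theta(f)$, which does not affect the comparison because $\theta$ is fixed.
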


\begin{proof}
Fix $(\beta,\theta)$. Because $R_\theta$ is unchanged when $T$ is multiplied by a positive constant, every nonzero ray of $\Tcal_L^+$ has a unique representative under the normalization
\[
  \int_0^\infty \e^{-\beta z}T(z)\dd z=1.
\]
On this normalized slice, the denominator of $R_\theta$ is the fixed constant $\theta^2$, while the numerator is linear in $T$. Thus, $R_\theta$ becomes a continuous linear functional of $T$. The normalized slice is compact and convex, so a minimum can be taken at an extreme point. Since each ray meets the slice once, its extreme points correspond exactly to the extreme rays of the original cone. By Proposition~\ref{prop:extreme-rays}, such an envelope is a product of $L$ squared sines. Since $\sin\vartheta=\cos(\vartheta-\pi/2)$, each factor satisfies $\sin^2((z-\phi_j)/2)=\cos^2((z-\phi_j-\pi)/2)$, so shifting every offset by $\pi$ gives a product of squared cosines with the same zero locations. Finally take the infimum over $(\beta,\theta)$ and use Proposition~\ref{prop:relative-loss}.
\end{proof}

By Corollary~\ref{cor:cosine-equivalence}, the common infimum can be computed through the spectral factor and the two-variable profile \eqref{eq:lambda-profile}.

\section{Large-order behavior}\label{sec:asymptotics}
Recall that the rescaled formulation in \eqref{eq:intro-scaled-family} separates two sources of variability. A harmonic envelope can concentrate most of its mass inside one period, but periodicity repeats the same profile in every later period. The exponential factor then determines how much mass is assigned to those replicas. The aim of this section is constructive. Rather than identify the optimizing $\beta$ and envelope at each finite order, we exhibit one admissible sequence whose SCV is $O(N^{-2})$. The construction must balance two effects. Increasing $\beta$ suppresses the replicated periods, but the same exponential tilt favors mass near the beginning of the first period and can strongly amplify a small part of a peak that wraps across $2\pi$. We first separate the within-period and between-period contributions exactly, then construct a sharply concentrated first-period profile for which this trade-off can be controlled.

\subsection{Decomposition over harmonic periods}

This decomposition is exact for every nonnegative $2\pi$-periodic envelope, and identifies the damping parameter $\beta$ with the geometric weight assigned to successive copies of one period.

The same geometric cell decomposition was used in \citet[Lemma~3 and equation~(12)]{BattagliolaPeralta2026OptFree} for the powered-Fej\'er CME family. Importantly, the argument depends only on periodicity and exponential damping, so it applies to every admissible harmonic envelope, not only to that explicit construction.

Let $T$ be a nonzero nonnegative $2\pi$-periodic function and let $Z$ have density proportional to
\[
  \e^{-\beta z}T(z),\qquad z\geq0,
\]
for some $\beta>0$. Define $\Theta$ on $[0,2\pi)$ by the density proportional to
\[
  \e^{-\beta\theta}T(\theta),
  \qquad 0\leq\theta<2\pi,
\]
and put
\[
  q=\e^{-2\pi \beta}.
\]

\begin{proposition}\label{prop:period-decomposition}
Let $K$ be independent of $\Theta$ with geometric distribution
\[
  \Pr(K=k)=(1-q)q^k,
  \qquad k=0,1,2,\ldots.
\]
Then
\begin{equation}\label{eq:period-sum}
  Z\stackrel d=\Theta+2\pi K.
\end{equation}
Consequently,
\begin{align}
  \E[Z]
  &=\E[\Theta]+\frac{2\pi q}{1-q},
  \label{eq:period-mean}\\
  \Var(Z)
  &=\Var(\Theta)+\frac{(2\pi)^2q}{(1-q)^2}.
  \label{eq:period-variance}
\end{align}
\end{proposition}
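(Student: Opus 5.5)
The plan is to prove \eqref{eq:period-sum} by comparing densities directly. Then \eqref{eq:period-mean} and \eqref{eq:period-variance} follow from independence and the moments of the geometric law.

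\textbf{Step 1: the law of $\Theta+2\pi K$.} Every $z\geq0$ has a unique representation $z=\theta+2\pi k$ with $k\in\{0,1,2,\ldots\}$ and $\theta\in[0,2\pi)$. Write
\[
  C=\int_0^{2\pi}\e^{-\beta\theta}T(\theta)\dd\theta,
\]
which lies in $(0,\infty)$ because $T$ is nonzero, nonnegative and periodic. (For the trigonometric-polynomial case, "nonzero" already forces $T$ to be nonzero on a set of positive measure within one period.) By independence, the density of $\Theta+2\pi K$ at $z=\theta+2\pi k$ is
\[
  (1-q)q^k\,\frac{\e^{-\beta\theta}T(\theta)}{C}.
\]
Using $q^k=\e^{-2\pi\beta k}$ and the periodicity $T(\theta)=T(\theta+2\pi k)$, this equals
\[
  \frac{1-q}{C}\,\e^{-\beta z}T(z).
\]
So the density of $\Theta+2\pi K$ is proportional to $\e^{-\beta z}T(z)$ on $[0,\infty)$, which is the law of $Z$. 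Since both are probability densities, the constants agree automatically. As a check, $\int_0^\infty\e^{-\beta z}T(z)\dd z=C\sum_k q^k=C/(1-q)$. I would phrase the comparison via the distribution function or a test-function integral, splitting $\int_0^\infty$ into the cells $[2\pi k,2\pi(k+1))$, so that no pointwise density subtleties arise.

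\textbf{Step 2: the moments.} The geometric law with $\Pr(K=k)=(1-q)q^k$ has
\[
  \E K=\frac{q}{1-q},\qquad \Var K=\frac{q}{(1-q)^2},
\]
obtained from the standard generating-function computation. Linearity of expectation then gives \eqref{eq:period-mean}. Independence of $\Theta$ and $K$ gives
\[
  \Var(Z)=\Var(\Theta)+(2\pi)^2\Var K,
\]
which is \eqref{eq:period-variance}. Finiteness is clear: $\Theta$ is bounded, and $K$ has all moments since $q<1$.

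I do not expect a genuine obstacle. The only point requiring care is Step~1: the cell decomposition has to be justified via the periodicity of $T$, and the normalizing constant has to be confirmed positive.
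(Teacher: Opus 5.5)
Your proof is correct and follows the paper's own argument. You split $[0,\infty)$ into the cells $[2\pi k,2\pi(k+1))$, use periodicity of $T$ together with $\e^{-\beta(\theta+2\pi k)}=q^k\e^{-\beta\theta}$ to factor the law into a geometric period index times a common within-period law, and then read off the mean and variance from independence and the geometric moments $q/(1-q)$ and $q/(1-q)^2$. One small precision: finiteness of $C$ does not follow from nonnegativity and periodicity alone. It follows from the standing assumption that $\e^{-\beta z}T(z)$ is normalizable, since $\int_0^\infty\e^{-\beta z}T(z)\dd z=C/(1-q)$, and it is automatic when $T$ is a trigonometric polynomial.
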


\begin{proof}
On the $k$th period write $z=\theta+2\pi k$ with $0\leq\theta<2\pi$. Periodicity gives $T(z)=T(\theta)$ and the exponential factor contributes
\[
  \e^{-\beta(\theta+2\pi k)}
  =q^k\e^{-\beta\theta}.
\]
After normalization, the period index is geometric and its conditional position inside the period has the same law for every $k$. This gives \eqref{eq:period-sum}, while \eqref{eq:period-mean} and \eqref{eq:period-variance} follow from independence.
\end{proof}

Since $\vH_L$ is an infimum, an upper bound only requires one convenient admissible choice of $\beta$ for each $L$. We do not claim that the following choice is the finite-order optimizer. To make the geometric contribution negligible on the $L^{-2}$ scale, take
\begin{equation}\label{eq:a-log-scale}
  \beta_L=\frac{3\log L}{2\pi}.
\end{equation}
Then $q_L=L^{-3}$ and
\begin{equation}\label{eq:replica-negligible}
  L^2\frac{q_L}{(1-q_L)^2}\longrightarrow0.
\end{equation}
Thus, the independent period index contributes $o(L^{-2})$ to the variance. The price of this choice is that $\e^{2\pi\beta_L}=L^3$. Within the first period, any mass that wraps from just beyond $2\pi$ back toward zero is amplified by a factor $L^3$. The first-period construction below is designed precisely to retain the $L^{-2}$ concentration scale while suppressing this wrapped tail strongly enough to overcome that amplification.

\subsection{A concentrated circular law}
The quantity $4\sin^2(u/2)=|\e^{\ii u}-1|^2$ is the squared chordal distance from $u=0$ on the unit circle. We construct an auxiliary circular law whose expectation of this distance is of order $m^{-2}$. Notice that this circular law is not yet the random clock on $[0,\infty)$. It will instead be used to design the periodic envelope that is unfolded into ordinary time in the next subsection.

For an integer $m\geq1$, put
\[
  s_k=\sin\!\left(\frac{(k+1)\pi}{m+2}\right),\qquad
  S_m(u)=\sum_{k=0}^{m}s_k\e^{\ii ku},
\]
and let $U_m$ have density
\[
  h_m(u)=\frac{\abs{S_m(u)}^2}
                   {2\pi\sum_{k=0}^{m}s_k^2},
  \qquad -\pi\leq u<\pi.
\]
Parseval's identity shows that the denominator normalizes $h_m$ to integrate to one. The coefficients are real, so $h_m$ is even. Write
\[
  \ell_m=4\sin^2\!\left(\frac{\pi}{2(m+2)}\right).
\]
The sine coefficients are not ad hoc. With $s_{-1}=s_{m+1}=0$, the vector $(s_0,\ldots,s_m)^{\top}$ is the first eigenvector of the discrete Dirichlet Laplacian, the unscaled centered second-difference matrix for $-u''$ with homogeneous Dirichlet boundary conditions,
\[
  (D_m\bm s)_k=2s_k-s_{k-1}-s_{k+1},\qquad 0\leq k\leq m,
\]
whose eigenvalues are $4\sin^2(j\pi/(2(m+2)))$, $j=1,\ldots,m+1$ \citep[Section~3]{StrangMacNamara2014}, so $\ell_m$ is the smallest one. The following identity is, by Parseval, the corresponding Rayleigh quotient $\bm s^{\top}D_m\bm s/(\bm s^{\top}\bm s)$.

\begin{lemma}\label{lem:sine-moment}
We have
\begin{equation}\label{eq:circular-moment}
  \E\!\left[4\sin^2(U_m/2)\right]=\ell_m,
  \qquad m^2\ell_m\longrightarrow\pi^2.
\end{equation}
\end{lemma}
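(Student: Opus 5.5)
We compute the expectation directly from the definition of $h_m$ and reduce it to a quadratic form in the coefficient vector $\bm s=(s_0,\ldots,s_m)^{\top}$. The starting point is the identity $4\sin^2(u/2)=\abs{\e^{\ii u}-1}^2=2-\e^{\ii u}-\e^{-\ii u}$. From it,
\[
  4\sin^2(u/2)\abs{S_m(u)}^2=\abs{(\e^{\ii u}-1)S_m(u)}^2 .
\]
Expanding the product gives $(\e^{\ii u}-1)S_m(u)=\sum_{k=0}^{m+1}(s_{k-1}-s_k)\e^{\ii ku}$, with the convention $s_{-1}=s_{m+1}=0$.

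Parseval's identity on $[-\pi,\pi)$ then yields
\[
  \int_{-\pi}^{\pi}4\sin^2(u/2)\abs{S_m(u)}^2\dd u
  =2\pi\sum_{k=0}^{m+1}(s_k-s_{k-1})^2 .
\]
A summation by parts, using the zero boundary values, identifies the right-hand side with $2\pi\,\bm s^{\top}D_m\bm s$. Dividing by the normalizing constant $2\pi\sum_k s_k^2$ gives
\[
  \E[4\sin^2(U_m/2)]=\frac{\bm s^{\top}D_m\bm s}{\bm s^{\top}\bm s}.
\]

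It then suffices to verify that $\bm s$ is an eigenvector of $D_m$ with eigenvalue $\ell_m$; the Rayleigh quotient then equals $\ell_m$ exactly. Set $h=\pi/(m+2)$, so that $s_k=\sin((k+1)h)$. The sum-to-product formula gives, for every $0\le k\le m$,
\[
  s_{k-1}+s_{k+1}=2\cos(h)\,s_k .
\]
At the endpoints this holds precisely because $\sin(0)=0$ and $\sin((m+2)h)=\sin\pi=0$, which matches the convention $s_{-1}=s_{m+1}=0$. Hence $(D_m\bm s)_k=(2-2\cos h)s_k=4\sin^2(h/2)\,s_k=\ell_m s_k$. We also need $\bm s\neq0$, which holds since $s_0=\sin h>0$. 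This establishes the first claim in \eqref{eq:circular-moment}.

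For the limit, since $\sin x\sim x$ as $x\to0$,
\[
  m^2\ell_m=4m^2\sin^2\!\left(\frac{\pi}{2(m+2)}\right)\sim\frac{4m^2\pi^2}{4(m+2)^2}\to\pi^2 .
\]

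The only step requiring care is the boundary bookkeeping. We must check that the Parseval sum runs over $k=0,\ldots,m+1$, and that the endpoint terms $s_0^2$ and $s_m^2$ are exactly what make the sum of squared differences equal $\bm s^{\top}D_m\bm s$. Everything else is a routine trigonometric computation.
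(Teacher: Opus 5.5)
Your proof is correct and follows essentially the same route as the paper. Both arguments apply Parseval to $(\e^{\ii u}-1)S_m(u)$ to obtain the boundary-augmented sum of squared differences, use the Dirichlet eigen-relation $2s_k-s_{k-1}-s_{k+1}=\ell_m s_k$ (multiplied by $s_k$ and summed, which is your summation-by-parts step) to identify the numerator as $\ell_m\sum_k s_k^2$, and take the limit from $\sin x/x\to1$.
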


\begin{proof}
For any finite coefficient sequence $a_0,\ldots,a_d$, integration of the cross terms gives
\begin{equation}\label{eq:finite-parseval}
  \frac1{2\pi}\int_{-\pi}^{\pi}
    \left|\sum_{k=0}^{d}a_k\e^{\ii ku}\right|^2\dd u
  =\sum_{k=0}^{d}|a_k|^2.
\end{equation}
Indeed, the integral of $\e^{\ii(j-k)u}/(2\pi)$ is one when $j=k$ and zero otherwise. This is the finite-polynomial case of Parseval's identity. See \citet[Chapter~3, Theorem~1.3(ii)]{SteinShakarchi2003}. Applying \eqref{eq:finite-parseval} to $S_m$ and $(1-\e^{\ii u})S_m(u)$ gives
\[
  \E\!\left[4\sin^2(U_m/2)\right]
  =\frac{s_0^2+\sum_{k=1}^{m}(s_k-s_{k-1})^2+s_m^2}
         {\sum_{k=0}^{m}s_k^2}.
\]
Set $s_{-1}=s_{m+1}=0$. The sine addition formula gives
\[
  2s_k-s_{k-1}-s_{k+1}=\ell_m s_k,
  \qquad 0\leq k\leq m.
\]
Multiplying by $s_k$ and summing shows that the numerator above is $\ell_m\sum_{k=0}^{m}s_k^2$. The limit follows from $\sin x/x\to1$ as $x\to0$.
\end{proof}

\subsection{Quadratic concentration}
We now transfer the circular concentration of $U_m$ to ordinary-time concentration of the first-period variable, while resolving the trade-off created by the growing damping $\beta_L$. The circular peak alone has only polynomial control of fixed-distance tails. After unfolding near $2\pi$, such a tail would be amplified by the factor $L^3$. We therefore multiply the sine-polynomial envelope by a cosine power. The proof will split the second moment about the target point into an unwrapped part with asymptotic upper bound $(\pi^2+o(1))/L^2$, and a wrapped part, which the cosine power makes negligible.

Fix the within-period peak location $\vartheta\in(\pi,2\pi)$ and write $\delta=2\pi-\vartheta$. The restriction $\vartheta>\pi$ makes wrapping one-sided when the circular coordinate $u\in[-\pi,\pi)$ is unfolded onto $[0,2\pi)$. Points can cross the right endpoint $2\pi$ but not the left endpoint $0$. The eventual SCV bound is proportional to $1/\vartheta^2$, so we shall later take $\vartheta$ as close to $2\pi$ as possible. For $L\geq4$, set
\[
  r_L=\lfloor\sqrt L\rfloor,\qquad m_L=L-r_L,
\]
and define
\begin{equation}\label{eq:sine-cosine-envelope}
  T_{L,\vartheta}(z)
  =\abs{S_{m_L}(z-\vartheta)}^2
    \cos^{2r_L}\!\left(\frac{z-\vartheta}{2}\right).
\end{equation}
Let $Z_{L,\vartheta}$ have density proportional to $\e^{-\beta_Lz}T_{L,\vartheta}(z)$ on $[0,\infty)$, with $\beta_L$ given by \eqref{eq:a-log-scale}. The two factors in \eqref{eq:sine-cosine-envelope} have trigonometric degrees $m_L$ and $r_L$, so $T_{L,\vartheta}\in\Tcal_L^+$ and the distribution has an ME representation of order at most $N=2L+1$. The sine-weighted coefficient sequence supplies the constant in \eqref{eq:circular-moment}. The extra cosine power suppresses mass away from $\vartheta$, while using only $r_L=o(L)$ of the available degree. The choice $r_L=\lfloor\sqrt L\rfloor$ has three useful properties. First, $r_L\to\infty$ gives increasing tail suppression. Second, $r_L=o(L)$ leaves $m_L/L\to1$ so the sine polynomial retains the leading $L^{-2}$ constant. Third, $\beta_L^2/r_L\to0$ ensures that the exponential tilt does not alter that leading constant.

\begin{theorem}[Quadratic harmonic construction]\label{thm:quadratic-construction}
For every fixed $\vartheta\in(\pi,2\pi)$, the preceding construction satisfies
\begin{equation}\label{eq:construction-moments}
  \E[Z_{L,\vartheta}]\longrightarrow\vartheta,
  \qquad
  \limsup_{L\to\infty}L^2\Var(Z_{L,\vartheta})\leq\pi^2.
\end{equation}
Consequently,
\begin{equation}\label{eq:construction-bound}
  \limsup_{L\to\infty}(2L+1)^2\SCV(Z_{L,\vartheta})
  \leq\frac{4\pi^2}{\vartheta^2}.
\end{equation}
\end{theorem}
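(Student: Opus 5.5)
Using Proposition~\ref{prop:period-decomposition}, I write $Z_{L,\vartheta}\stackrel d=\Theta_L+2\pi K_L$ with $q_L=L^{-3}$. By \eqref{eq:replica-negligible} the geometric part shifts the mean by $O(L^{-3})$ and adds $o(L^{-2})$ to the variance. So everything reduces to the first-period variable $\Theta_L$ on $[0,2\pi)$, with density proportional to $\e^{-\beta_L\theta}T_{L,\vartheta}(\theta)$. I will show
\[
\E[(\Theta_L-\vartheta)^2]\le(\pi^2+o(1))L^{-2}.
\]
This gives $\Var(\Theta_L)\le$ the same bound and $\E[\Theta_L]\to\vartheta$ by Cauchy--Schwarz. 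Then \eqref{eq:construction-moments} follows, and \eqref{eq:construction-bound} follows from $\SCV=\Var/\E^2$ and $(2L+1)^2/L^2\to4$.

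To analyse $\Theta_L$, I change variables to $u=\theta-\vartheta$, wrapped into $[-\pi,\pi)$. Since $\vartheta>\pi$, the points $\theta\in[0,2\pi)$ correspond to two pieces. The \emph{unwrapped} piece is $u\in[-\pi,\delta)$ with $\theta=\vartheta+u$. The \emph{wrapped} piece is $u\in[\delta,\pi)$ with $\theta=\vartheta+u-2\pi$; there the exponential weight is $\e^{-\beta_L\vartheta}\e^{-\beta_L u}\cdot L^3$ relative to the unwrapped formula. The unnormalised density in $u$ is therefore
\[
g(u)=\e^{-\beta_L u}\,\abs{S_{m_L}(u)}^2\cos^{2r_L}(u/2)\,w(u),
\]
with $w=1$ on the unwrapped piece and $w=L^3$ on the wrapped piece.

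The key comparison is against the circular law with density proportional to
\[
k(u)=\abs{S_{m_L}(u)}^2\cos^{2r_L}(u/2).
\]
On $\abs{u}\le\eta_L$ with $\eta_L=L^{-1/2}\log L$, say, the tilt $\e^{-\beta_L u}$ equals $1+O(\beta_L\eta_L)=1+o(1)$. For $u\in[-\pi,\pi)$ one has $u^2\le(\pi^2/4)\cdot4\sin^2(u/2)$, and near $0$, $u^2=4\sin^2(u/2)(1+O(u^2))$. Hence the numerator contribution of the central window is at most
\[
(1+o(1))\int 4\sin^2(u/2)\,k(u)\dd u .
\]
Since $\cos^{2r_L}\le1$, this is at most $(1+o(1))\ell_{m_L}\int\abs{S_{m_L}}^2$ by Lemma~\ref{lem:sine-moment}.

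For the normalisation I need a lower bound on the mass of $k$. I would use
\[
\cos^{2r_L}(u/2)\ge1-r_Lu^2/4,
\]
which gives $\int k\ge\int\abs{S_{m_L}}^2-(r_L/4)\int u^2\abs{S_{m_L}}^2\ge(1-O(r_L/m_L^2))\int\abs{S_{m_L}}^2$. So the ratio is $(1+o(1))\ell_{m_L}=(\pi^2+o(1))/L^2$, using $m_L/L\to1$.

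The main obstacle is the tails $\abs{u}>\eta_L$, where the tilt $\e^{\beta_L\pi}=L^{3/2}$ and the wrap factor $L^3$ can amplify mass. There $\cos^{2r_L}(u/2)\le\exp(-r_Lu^2/4)\le\exp(-c\log^2L)$, which is superpolynomially small. Meanwhile $\abs{S_{m_L}}^2\le(\sum s_k)^2=O(m_L)\sum s_k^2$. So the tail contribution to both numerator and denominator is at most
\[
L^{9/2}\,O(m_L)\,\e^{-c\log^2L}\sum s_k^2=o(L^{-2})\sum s_k^2 ,
\]
using $(\theta-\vartheta)^2\le4\pi^2$. The condition $\beta_L^2/r_L\to0$ is what keeps the window tilt harmless, and $r_L\eta_L^2\to\infty$ is what kills the tails; I would check that both hold with $r_L=\lfloor\sqrt L\rfloor$. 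Combining the window and tail estimates yields the required bound on $\E[(\Theta_L-\vartheta)^2]$.
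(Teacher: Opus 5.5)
Your tail estimate fails for the window you chose. With $r_L=\lfloor\sqrt L\rfloor$ and $\eta_L=L^{-1/2}\log L$ you get $r_L\eta_L^2\approx L^{-1/2}\log^2L\to0$. Hence $\exp(-r_L\eta_L^2/4)\to1$, and the cosine power gives essentially no suppression just outside the window. The bound $\exp(-c\log^2L)$ you claim would require $r_L$ of order $L$. A cosine power of degree $r_L\sim\sqrt L$ only localizes at scale $r_L^{-1/2}\sim L^{-1/4}$. So on $\eta_L<\abs{u}\lesssim L^{-1/4}$ your tail bound degenerates to $L^{9/2}\,O(m_L)\sum_k s_k^2$, which is nowhere near $o(L^{-2})\sum_k s_k^2$. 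The condition $r_L\eta_L^2\to\infty$ that you deferred checking is false for this $\eta_L$. The theorem is not threatened, but this step of your proof does not go through as written.

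The fix is to choose the intermediate scale correctly. You need $\beta_L\eta_L\to0$ so that the tilt is harmless in the window. You need $r_L\eta_L^2/\log L\to\infty$ so that the tails are superpolynomially small and beat the polynomial factors. Together these mean $\sqrt{\log L}\,L^{-1/4}\ll\eta_L\ll1/\log L$, which is possible because $\beta_L^2\log L/r_L\to0$. For example, $\eta_L=L^{-1/8}$ gives $\beta_L\eta_L=O(L^{-1/8}\log L)$ and $\exp(-r_L\eta_L^2/4)\approx\exp(-L^{1/4}/4)$. With this choice the following steps go through:
\begin{itemize}
\item the window comparison via Lemma~\ref{lem:sine-moment};
\item the Bernoulli lower bound on $\int k$;
\item the sup bound $\abs{S_{m_L}}^2\le(m_L+1)\sum_k s_k^2$ on the tails;
\item the denominator bound $\int g\ge\e^{-\beta_L\eta_L}\bigl(\int k-\int_{\abs{u}>\eta_L}k\bigr)$.
\end{itemize}

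For comparison, the paper avoids any intermediate scale. It spends one cosine factor on $u^2\cos^2(u/2)\le4\sin^2(u/2)$ and controls the tilt on the whole unwrapped range by completing the square, $\beta\abs{u}-(r-1)u^2/4\le\beta^2/(r-1)\to0$. Only the wrapped piece, at fixed distance $\delta$, then needs a tail bound, namely $4\pi^2L^3\e^{-r\delta^2/4}$. The normalizer is bounded below by pairing $\pm u$ on $[-\delta,\delta]$, which turns the tilt into $\cosh(\beta u)\ge1$. That route is global and needs no sup-norm bound on $S_{m_L}$. Yours, once the window is fixed, is a standard local-plus-tail argument.
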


\begin{proof}
Write $m=m_L$, $r=r_L$ and $\beta=\beta_L$, and let $\Theta_{L,\vartheta}$ denote the first-period component in Proposition~\ref{prop:period-decomposition}. We bound its second moment about $\vartheta$, which also bounds its variance.

Use $u\in[-\pi,\pi)$ as the representative of $z-\vartheta$ modulo $2\pi$, and put $w_L(u)=h_m(u)\cos^{2r}(u/2)$. For $u<\delta$ the position in the first period is $z=\vartheta+u$, and for $u\geq\delta$ it is $z=\vartheta+u-2\pi$. After cancelling the common factors, the normalizing constant is therefore
\[
  D_L=\int_{-\pi}^{\delta}\e^{-\beta u}w_L(u)\dd u
        +L^3\int_{\delta}^{\pi}\e^{-\beta u}w_L(u)\dd u.
\]
The factor $L^3=\e^{2\pi\beta}$ accounts for the part of the circular profile that wraps into the beginning of the first period. The normalizer $D_L$ is kept explicit because the following estimates first control unnormalized second-moment contributions. The lower bound $D_L\geq1-o(1)$ established below ensures that normalization does not increase the leading asymptotic upper constant.

We first bound $D_L$ from below, using the elementary inequality $1-(1-x)^r\leq rx$ for $0\leq x\leq1$, which gives
\[
  \E\!\left[1-\cos^{2r}(U_m/2)\right]\leq\frac{r\ell_m}{4}.
\]
Also, by \eqref{eq:circular-moment},
\[
  \Pr(|U_m|\geq\delta)
  \leq\frac{\ell_m}{4\sin^2(\delta/2)}.
\]
Since $w_L$ is even and $[-\delta,\delta]\subset[-\pi,\delta]$, pairing the contributions at $u$ and $-u$ replaces the exponential weights by $\cosh(\beta u)\geq1$, and we obtain
\begin{align}
  D_L
  &\geq\int_{-\delta}^{\delta}\cosh(\beta u)w_L(u)\dd u
   \geq\int_{-\delta}^{\delta}w_L(u)\dd u \notag\\
  &\geq1-\frac{r\ell_m}{4}
          -\frac{\ell_m}{4\sin^2(\delta/2)}
   =1-o(1).
  \label{eq:normalizer-bound}
\end{align}

Next, consider the second moment on the part that does not wrap. Here, the potentially increasing factor $\e^{\beta|u|}$ from the exponential tilt competes with the Gaussian-type decay supplied by the cosine power. The exponent $\beta|u|-(r-1)u^2/4$ below records exactly this competition. For $|u|\leq\pi$,
\begin{equation}\label{eq:elementary-cosine-bounds}
  u^2\cos^2(u/2)\leq4\sin^2(u/2),
  \qquad
  \cos(u/2)\leq\e^{-u^2/8}.
\end{equation}
The first inequality follows from $x\leq\tan x$ for $0\leq x<\pi/2$. Integrating that inequality gives $-\log\cos x\geq x^2/2$, and hence the second, with the endpoint values following by continuity. As $r>1$, completing the square yields
\begin{align*}
  u^2\e^{-\beta u}\cos^{2r}(u/2)
  &\leq4\sin^2(u/2)
      \exp\!\left(\beta|u|-\frac{r-1}{4}u^2\right)\\
  &\leq4\sin^2(u/2)\exp\!\left(\frac{\beta^2}{r-1}\right).
\end{align*}
Integration against $h_m$ thus bounds the unnormalized second-moment contribution from $u<\delta$ by $\ell_m\exp(\beta^2/(r-1))$.

On the wrapped part, $u\geq\delta>0$ and the displacement from $\vartheta$ is $u-2\pi$. Every wrapped point is therefore at least circular distance $\delta$ from the peak, and $\cos^{2r}(u/2)\leq\exp(-ru^2/4)\leq\exp(-r\delta^2/4)$. This exponential suppression is what defeats the $L^3$ amplification caused by wrapping. Therefore
\[
  L^3\int_{\delta}^{\pi}(u-2\pi)^2\e^{-\beta u}w_L(u)\dd u
  \leq4\pi^2L^3\e^{-r\delta^2/4}.
\]
Here, we used $(u-2\pi)^2\leq4\pi^2$, $\e^{-\beta u}\leq1$, \eqref{eq:elementary-cosine-bounds}, and $\int_{\delta}^{\pi}h_m\leq1$. Since $r$ is of order $\sqrt L$ and $\delta>0$ is fixed, $L^3\e^{-r\delta^2/4}=o(L^{-2})$. The exponential decay in $\sqrt L$ dominates every polynomial factor. Combining the two contributions gives
\[
  \E\!\left[(\Theta_{L,\vartheta}-\vartheta)^2\right]
  \leq\frac{\ell_m\exp(\beta^2/(r-1))
                  +4\pi^2L^3\e^{-r\delta^2/4}}{D_L}.
\]

Now $m/L\to1$, $L^2\ell_m\to\pi^2$, and $\beta^2/(r-1)\to0$. Because $\delta$ is fixed and positive, the wrapped bound is $o(L^{-2})$. Together with \eqref{eq:normalizer-bound}, this proves
\[
  \limsup_{L\to\infty}
  L^2\E\!\left[(\Theta_{L,\vartheta}-\vartheta)^2\right]\leq\pi^2.
\]
In particular, $\E[\Theta_{L,\vartheta}]\to\vartheta$ by Cauchy--Schwarz, and the same upper bound holds for $L^2\Var(\Theta_{L,\vartheta})$.

Finally, Proposition~\ref{prop:period-decomposition} adds an independent geometric number of periods with parameter $q_L=L^{-3}$. Its mean contribution vanishes and its variance contribution is $o(L^{-2})$ by \eqref{eq:replica-negligible}. This proves \eqref{eq:construction-moments}. Dividing the variance by the squared mean and using $(2L+1)^2/L^2\to4$ gives \eqref{eq:construction-bound}.
\end{proof}

For fixed $\vartheta$, the construction therefore has an absolute variance of order at most $\pi^2/L^2$ and an asymptotic mean $\vartheta$. Since the resulting SCV bound $4\pi^2/\vartheta^2$ is smallest as $\vartheta$ approaches $2\pi$, we let $\vartheta\uparrow2\pi$ in the corollary below.
\begin{corollary}\label{cor:limsup}
The harmonic optimum satisfies
\begin{equation}\label{eq:limsup}
  \limsup_{L\to\infty}(2L+1)^2\vH_L\leq1.
\end{equation}
The same bound is attainable within the cosine-square product family.
\end{corollary}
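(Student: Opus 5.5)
The bound is obtained from Theorem~\ref{thm:quadratic-construction} by two observations, followed by a limit in $\vartheta$. First, for each $L\geq4$ and each fixed $\vartheta\in(\pi,2\pi)$, the kernel $\e^{-\beta_L z}T_{L,\vartheta}(z)$ belongs to $\Hcal_L(\beta_L)$. Indeed, $T_{L,\vartheta}$ is a product of the nonnegative trigonometric polynomial $\abs{S_{m_L}(z-\vartheta)}^2$ of degree $m_L$ and $\cos^{2r_L}((z-\vartheta)/2)$ of degree $r_L$. So it lies in $\Tcal_L^+$ and is not identically zero. Since $\vH_L$ is an infimum over $\beta>0$ and $\Hcal_L(\beta)$, and SCV is invariant under positive scaling, this gives
\[
  \vH_L\leq\SCV(Z_{L,\vartheta})
\]
for all large $L$.

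Second, multiplying by $(2L+1)^2$ and taking $\limsup$, \eqref{eq:construction-bound} yields
\[
  \limsup_{L\to\infty}(2L+1)^2\vH_L\leq\frac{4\pi^2}{\vartheta^2}
\]
for every $\vartheta\in(\pi,2\pi)$. The left-hand side does not depend on $\vartheta$, so letting $\vartheta\uparrow2\pi$ gives \eqref{eq:limsup}. No uniformity in $\vartheta$ is needed, because for each fixed $\vartheta$ we only use a pointwise $\limsup$ bound. This is why the order of limits is harmless here: we bound the $\vartheta$-free quantity by each member of a family and then take the infimum over that family.

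For the final sentence, the cosine-square product family, I would invoke Corollary~\ref{cor:cosine-equivalence}. It states that for every $L$, $\vH_L$ equals the infimum of SCV over $\beta>0$ and offsets $\phi_1,\ldots,\phi_L$ of $\e^{-\beta z}\prod_j\cos^2((z-\phi_j)/2)$. Hence the product-family infimum satisfies the same $\limsup$ bound. If ``attainable'' is meant more concretely, namely as an explicit sequence of products achieving the bound, I would argue as follows. Fix $(\beta_L,\theta)$ with $\theta=m_2/m_1$ evaluated at $Z_{L,\vartheta}$. The extreme-point argument in the proof of Corollary~\ref{cor:cosine-equivalence} then gives a product envelope whose $R_\theta$ is at most that of $T_{L,\vartheta}$. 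By Proposition~\ref{prop:relative-loss}, its SCV is at most $\SCV(Z_{L,\vartheta})$. This produces a product-form sequence obeying the same estimate, followed by a diagonal choice of $\vartheta_L\uparrow2\pi$ slowly enough. The only mildly delicate step is this diagonal selection: for each $k$ one picks $\vartheta=2\pi-1/k$ and $L_k$ large enough that $(2L+1)^2\SCV\leq4\pi^2/\vartheta^2+1/k$ for all $L\geq L_k$. One then switches $\vartheta$ at the thresholds $L_k$, which can be taken increasing.
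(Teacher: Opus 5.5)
Your proof is correct and follows the paper's route: bound $\vH_L\leq\SCV(Z_{L,\vartheta})$ for each fixed $\vartheta\in(\pi,2\pi)$, apply Theorem~\ref{thm:quadratic-construction}, let $\vartheta\uparrow2\pi$ without any uniformity in $\vartheta$, and obtain the cosine-square statement from Corollary~\ref{cor:cosine-equivalence}. Your extra extreme-point and diagonal-selection argument, which produces an explicit product-form sequence, is also valid, but the paper only needs the equality of infima.
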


\begin{proof}
For every fixed $\vartheta\in(\pi,2\pi)$, $\vH_L\leq\SCV(Z_{L,\vartheta})$, so applying Theorem~\ref{thm:quadratic-construction} and then letting $\vartheta\uparrow2\pi$, an order of limits that requires no estimate uniform in $\vartheta$, gives the bound. The last statement follows from Corollary~\ref{cor:cosine-equivalence}.
\end{proof}

Here, $\vartheta$ is a location within the rescaled harmonic period, not the physical deterministic time being approximated. Since the SCV is invariant under positive rescaling, once a concentrated clock has been constructed it may be normalized to mean one and then multiplied by any desired time $t_0$.

\section{Numerical findings and implementation}\label{sec:numerics}

This section compares numerical values obtained from the reduction of Theorem~\ref{thm:two-parameter} with previously reported values, where previous computations replaced the full offset search by low-dimensional heuristic parametrizations \citep{HorvathHorvathTelek2020,AlmousaTelek2021}.

For any admissible recovered kernel, its exact SCV is an upper bound on $\vH_L$, whether or not the outer search has reached a global minimum. This does not by itself certify a floating-point approximation to that SCV or to the associated eigenvalue. The practical question at high order is therefore the floating-point accuracy with which $\lambda_L(\beta,\theta)$ itself is evaluated: the Toeplitz matrices in \eqref{eq:moment-matrix} can combine comparable-magnitude terms into a much smaller generalized eigenvalue, risking cancellation, and a spurious downward perturbation could make some parameter values appear better than they actually are. A Cholesky change of basis, so that $M_0(\beta)$ becomes the identity, reduces the pencil to a standard Hermitian eigenvalue problem \citep[Table~2.13, case~1]{AndersonEtAl1999}. This reduction alone does not certify the accuracy of a small computed eigenvalue. A recovered kernel can be checked independently by recomputing its moments at higher precision.

For comparison, we use the three-parameter heuristic benchmark of \citet{AlmousaTelek2021}, restricted to their reported values since the six-parameter heuristic they also develop is not evaluated here. Write $\widehat{\vH_L}$ for the tabulated approximation to $\vH_L$, obtained by transforming the numerically computed smallest generalized eigenvalue $\lambda_L(\beta,\theta)$ through \eqref{eq:v-from-rho}, and $v_L^{\mathrm{prev}}$ for the published three-parameter heuristic SCV. Table~\ref{tab:comparison} lists both at each order, together with the percentage reduction $(1-\widehat{\vH_L}/v_L^{\mathrm{prev}})\times100\%$. This reduction is about $46\%$ at the two smallest orders and ranges from about $65\%$ to $79\%$ at the larger orders considered, though not monotonically.

\begin{table}[H]
\centering
\caption{
{Comparison between the three-parameter heuristic values of \citet{AlmousaTelek2021} ($v_L^{\mathrm{prev}}$) and the numerical approximation to the harmonic infimum ($\widehat {\vH_L}$). The percentage improvement of the latter over the former is shown in the last column. The scenarios considered are those of Table~2 of \citet{AlmousaTelek2021}.}
}
\label{tab:comparison}
\begin{tabular}{rrrrr}
\toprule
$L$ & $N$ & {$v_L^{\mathrm{prev}}$} & { $\widehat {\vH_L}$} & {$(1-\widehat {\vH_L}/v_L^{\mathrm{prev}})\times100\%$} \\
\midrule
400  & 801  & $3.5945\times10^{-6}$  & $1.9403\times10^{-6}$ & $46.0\%$ \\
800  & 1601 & $8.53737\times10^{-7}$ & $4.5915\times10^{-7}$ & $46.2\%$ \\
1200 & 2401 & $3.69091\times10^{-7}$ & $1.1003\times10^{-7}$ & $70.2\%$ \\
1500 & 3001 & $2.32831\times10^{-7}$ & $8.1685\times10^{-8}$ & $64.9\%$ \\
2000 & 4001 & $1.28656\times10^{-7}$ & $3.2833\times10^{-8}$ & $74.5\%$ \\
2500 & 5001 & $8.12596\times10^{-8}$ & $1.7038\times10^{-8}$ & $79.0\%$ \\
\bottomrule
\end{tabular}
\end{table}

For the asymptotic discussion it is also useful to look at the scaled values $N^2{\widehat{\vH_L}}$. Figure~\ref{fig:scaled-scv} shows the selected numerical sequence used in the computations. After a small low-order maximum, the scaled values decrease steadily over the displayed range and reach about $1.18$ at $N=1601$. The higher-order entries in Table~\ref{tab:comparison} give scaled outputs of approximately $0.634$, $0.736$, $0.526$ and $0.426$ at $L=1200, 1500, 2000, 2500$, respectively. The horizontal level one is an asymptotic upper-bound reference, not a finite-order bound or a proved limiting value.

\begin{figure}[H]
\centering
\includegraphics[width=0.78\textwidth]{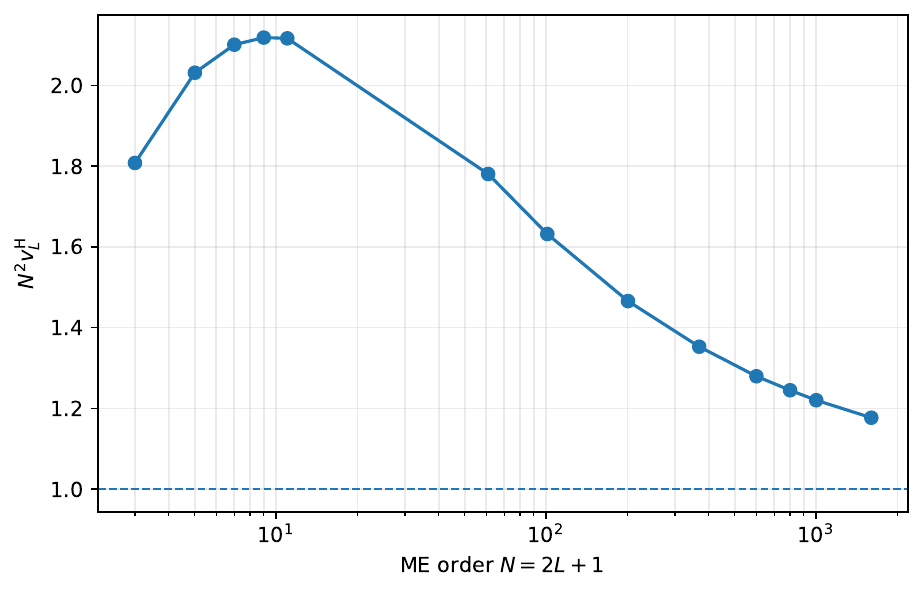}
\caption{Selected numerical {estimates} of $N^2{\widehat{\vH_L}}$, with
$N=2L+1$. The horizontal reference level is the asymptotic upper constant
one from {Corollary~\ref{cor:limsup}}.}
\label{fig:scaled-scv}
\end{figure}

\section{Discussion}\label{sec:discussion}

The harmonic SCV infimum admits an exact characterization through a two-dimensional nonlinear search, with the coefficient optimization handled by a generalized eigenvalue problem. Independently, the explicit construction proves $\limsup_{L\to\infty}(2L+1)^2\vH_L\leq1$.

It is useful to compare this with the polynomial-square repeated-pole class studied in \citet{BattagliolaPeralta2026PolynomialSquare}. Both constructions improve the Erlang SCV $1/N$ to a quadratic scale, though the mechanisms differ. In this class, the kernel is an exponential factor multiplied by the square of a degree-$m$ ordinary polynomial. Let $v_m$ denote the optimal SCV in this class, whose representation budget is $2m+1$. The polynomial-square problem is solved by Laguerre zero geometry and has the sharp asymptotic law
\[
  (2m+1)^2v_m\longrightarrow\pi^2,
\]
while the harmonic construction proved here gives
\[
  \limsup_{L\to\infty}(2L+1)^2\vH_L\leq1.
\]
The two classes have different pole architectures, so this is a comparison rather than an inclusion statement. It nevertheless shows quantitatively how allowing a harmonic array of complex poles can produce substantially more concentrated random clocks than the single repeated-real-pole polynomial-square architecture.

The proved asymptotic result is the upper bound \eqref{eq:limsup}. As with any finite-order computation, the numerical results in Section~\ref{sec:numerics} cannot by themselves certify whether this asymptotic constant is sharp. We leave that question for future work.

\end{document}